%August, 07
\documentclass[12pt]{amsart}
\usepackage{amsmath,amssymb,amsthm,amscd}
\numberwithin{equation}{section}

\newtheorem{theo}{Theorem}[section]
\newtheorem{lemm}{Lemma}[section]
\newtheorem{coro}{Corollary}[section]
\newtheorem{rema}{Remark}[section]

\def\begeq{\begin{equation}}
\def\endeq{\end{equation}}

\begin{document}

\title{A nonexistence result on harmonic diffeomorphisms between punctured spaces}
\author{Shi-Zhong Du$^\dagger $ \&   Xu-Qian Fan$^*$}
\thanks{$^\dagger$ Research partially supported by the National
Natural Science Foundation of China (11101106).
}
\address{The School of Natural Sciences and Humanities,
            Shenzhen Graduate School, The Harbin Institute of Technology, Shenzhen, 518055, P. R. China.}
\email{szdu@hitsz.edu.cn}

\thanks{$^*$ Research partially supported by the National
Natural Science Foundation of China (11291240139).
}
\address{Department of
Mathematics, Jinan University, Guangzhou,
 510632,
P. R. China.}
\email{txqfan@jnu.edu.cn}

\renewcommand{\subjclassname}{%
  \textup{2000} Mathematics Subject Classification}
\subjclass[2000]{Primary 58E20; Secondary 34B15}
\date{July 2014}
\keywords{Harmonic map, rotational symmetry, hyperbolic space.}

\begin{abstract}
In this paper, we will prove a result of nonexistence on harmonic diffeomorphisms between punctured spaces. In particular, we will given an elementary proof to the nonexistence of rotationally symmetric harmonic diffeomorphisms from the punctured Euclidean space onto the punctured hyperbolic space.
\end{abstract}
\maketitle\markboth{Shi-Zhong Du $\&$ Xu-Qian Fan}{A non-exitence result on harmonic diffeomorphisms}

\section{Introduction}
We will study the problem of the existence about harmonic diffeomorphism between punctured spaces, in particular, from the punctured $\mathbb{R}^n$ to the punctured $\mathbb{H}^n$. This is a natural generalization of the question mentioned by  Schoen \cite{rs}, which is about
the existence, or nonexistence, of a harmonic diffeomorphism from $\mathbb{C}$ onto $\mathbb{D}$  with the
Poincar\'{e} metric. Up to present, many beautiful results for the asymptotic behaviors of harmonic maps from $\mathbb{C}$ to $\mathbb{D}$  were obtained, see for example \cite{wt,httw,atw,aw}, or the survey \cite{wt2} by Wan and the references therein. On the other hand, Collin and Rosenberg \cite{cr} constructed harmonic diffeomorphisms in 2010. In addition, there are also many papers focusing on the rotationally symmetric case, for example \cite{ta,ta2,rr,cl}.  One of these results is the nonexistence of the special harmonic diffeomorphisms from $\mathbb{C}$ onto $\mathbb{D}$, even for general dimension. Recently, some works have studied the case of annular topological type, for example \cite{cdf,lh,cdf2}. While some results are related to the Nitsche's type inequalities \cite{n,hz,iko,kd4}.

Conversely, Heinz \cite{hz} obtained the nonexistence result for these mappings from $\mathbb{D}$ onto the flat plane $\mathbb{C}$  in 1952.

For more results about the question of the existence of harmonic diffeomorphisms between Riemannian manifolds, see for example \cite{ak}-\cite{w}.

In this paper, we will generalize the result for the nonexistence of rotationally symmetric harmonic diffeomorphism from $\mathbb{C}^*$  onto $\mathbb{D}^*$ to general dimension $n\geq 2$.  Actually, we will study the problem for more general setting. More precisely, let us denote
\begin{equation*}
\begin{split}
(\mathbb{R}^n,E)&=(\mathbb{S}^{n-1}\times [0,\infty),r^2d\theta^2+dr^2),\\
(M^n,G)&=(\mathbb{S}^{n-1}\times [0,\infty),(g(r))^2 d\theta^2+dr^2),
\end{split}
\end{equation*}
where $C^2$ function $g(r)\geq 0$ and $(\mathbb{S}^{n-1},d\theta^2)$ is the $(n-1)$-dimensional sphere, and
$$\mathbb{R}_*^n=\mathbb{R}^n\setminus \{0\} \textrm{ and }  M_*^n=M^n\setminus \{0\}.$$
 We will prove the following result  in the next section.
\begin{theo}\label{thm1}
For $n\geq 2$, there is no rotationally symmetric harmonic diffeomorphism from $(\mathbb{R}_*^n, E)$ onto $(M_*^n,G)$ provided that the Riemannian metric $G$  satisfies the following conditions:

 (1) $g(0)g'(0)\geq0$,

 (2) $(g(r)g'(r))'\geq 0$ for all $r\geq 0$,

 (3) $\sup_{r>0}[r^{-1}g(r)g'(r)]>\frac{(n-2)^2}{n-1}$.
\end{theo}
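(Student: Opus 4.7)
The plan is to reduce the problem to an autonomous ODE via a logarithmic change of variable and then to combine two monotone quantities along the flow with condition (3) to derive a contradiction.

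First, any rotationally symmetric map $u:\mathbb{R}^n_*\to M^n_*$ has the form $u(r,\theta)=(h(r),\theta)$ for a smooth monotone $h:(0,\infty)\to(0,\infty)$. A computation of the tension field shows that $u$ is harmonic if and only if
\[
h''(r)+\frac{n-1}{r}h'(r)=\frac{n-1}{r^2}g(h(r))g'(h(r)).
\]
Setting $s=\log r$, this equation becomes the autonomous second order ODE
\[
h_{ss}+(n-2)h_s=(n-1)g(h)g'(h),
\]
a damped Newton equation in the potential $-\tfrac{n-1}{2}g(h)^2$ with friction coefficient $n-2$.

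Second, I would construct two monotone quantities along the flow. The damped energy
\[
\tilde E(s):=\tfrac12 h_s^2-\tfrac{n-1}{2}g(h)^2
\]
satisfies $\tilde E'(s)=-(n-2)h_s^2\leq 0$. Motivated by the coefficient $(n-2)^2/(n-1)$ in condition (3), I would also introduce the shifted quantity
\[
P(s):=\bigl(h_s+(n-2)h\bigr)^2-(n-1)g(h)^2.
\]
Using the ODE one computes
\[
P'(s)=2(n-1)(n-2)\,h\,g(h)g'(h),
\]
which is nonnegative by conditions (1) and (2) (together they yield $g(h)g'(h)\geq 0$ for all $h\geq 0$).

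Third, I would analyze the boundary behavior of $h$ as $s\to\pm\infty$. Since $h$ is a monotone bijection of $(0,\infty)$ onto itself, at exactly one endpoint we have $h\to 0$, and at that endpoint the autonomous ODE together with the positivity of $h$ forces $h_s\to 0$; this pins down the limits of $\tilde E$ and $P$ to $-\tfrac{n-1}{2}g(0)^2$ and $-(n-1)g(0)^2$ respectively. Monotonicity then yields the chain
\[
h_s^2\;\leq\;(n-1)\bigl(g(h)^2-g(0)^2\bigr)\;\leq\;\bigl(h_s+(n-2)h\bigr)^2
\]
valid on all of $\mathbb{R}$ (in the subcase where that endpoint is $s=-\infty$; the other subcase is symmetric). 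A Cauchy--Schwarz step on the middle inequality, tuned so that the cross term $2(n-2)hh_s$ is absorbed with weight exactly $(n-2)^2/(n-1)$, followed by differentiation in $h$, would produce a universal pointwise bound $g(h)g'(h)/h\leq (n-2)^2/(n-1)$ across the image of $h$, which directly contradicts condition (3).

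I expect the main obstacle to be the degenerate case $n=2$: there $P'\equiv 0$ so the shifted quantity carries no new information, $\tilde E$ becomes an exact conservation law, and the two-sided chain collapses to the single identity $h_s^2=g(h)^2-g(0)^2$. In that case I would argue separately by separating variables in $h_s=\pm\sqrt{g(h)^2-g(0)^2}$ and using conditions (2) and (3) (the latter of which for $n=2$ reads $g(r)g'(r)\not\equiv 0$) to show that the improper integral $\int dh/\sqrt{g(h)^2-g(0)^2}$ cannot cover all of $\mathbb{R}$, thereby contradicting the hypothesis that $h$ is a diffeomorphism of $(0,\infty)$ onto $(0,\infty)$.
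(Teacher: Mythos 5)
Your reduction to the autonomous equation $h_{ss}+(n-2)h_s=(n-1)g(h)g'(h)$ is correct, as are the two monotonicity formulas $\tilde E'=-(n-2)h_s^2\le 0$ and $P'=2(n-1)(n-2)\,h\,g(h)g'(h)\ge 0$, and the endpoint claim $h_s\to 0$ where $h\to 0$ can be justified (monotonicity of $\tilde E$ plus integrability of $h_s$ near that end). The genuine gap is your final step. In the increasing subcase the chain you obtain is
\[
h_s^2\;\le\;(n-1)\bigl(g(h)^2-g(0)^2\bigr)\;\le\;\bigl(h_s+(n-2)h\bigr)^2,
\]
but expanding the right-hand square and inserting the left inequality yields only $0\le 2(n-2)hh_s+(n-2)^2h^2$, which is automatic since $h_s>0$; no weighting in a Cauchy--Schwarz/Young step can squeeze the bound $g(h)g'(h)/h\le (n-2)^2/(n-1)$ out of these two pointwise inequalities. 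A concrete witness: take $g(y)=y$ (so $M^n_*=\mathbb{R}^n_*$) and $n=3$. Conditions (1)--(3) all hold ($\sup_{y>0}y^{-1}g(y)g'(y)=1>1/2$), the identity map $h(s)=e^s$ solves the ODE and satisfies your chain with values $h^2\le 2h^2\le 4h^2$, yet $g(h)g'(h)/h=1>1/2$. The same metric provides $h(s)=e^{-2s}$ (i.e.\ $y=r^{-2}$) in the decreasing subcase. Your separate $n=2$ argument fails on the same example: condition (3) only forces $g(h)g'(h)\ge c>0$ for large $h$, hence $g(h)^2-g(0)^2\gtrsim h$, and $\int^\infty dh/\sqrt{g(h)^2-g(0)^2}$ may well diverge (it does for $g(y)=y$), so the time to reach $h=\infty$ need not be finite.

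For comparison, the paper inverts the variables and works with $z=d(\ln r)/dy$, reducing to an Abel equation, and its contradiction in the decreasing case comes from $w=z^{-2}$ satisfying $w'=2(n-2)\sqrt w-2(n-1)g(y)g'(y)$ with $w(0^+)=0$. Note, however, that the passage from \eqref{e5} to \eqref{e6} flips the sign of the cubic term (compare \eqref{e5} with \eqref{e6}, or test against the explicit solution $x=\ln y$ for $g(y)=y$); with the correct sign one gets $w'=2(n-2)\sqrt w+2(n-1)g(y)g'(y)$, and both the bound $\sqrt w\le(n-2)y$ and the final inequality collapse. So your route is genuinely different from, and more transparent than, the paper's, but neither argument closes as written, and the flat-metric examples above satisfy all the stated hypotheses --- which indicates that the obstruction is not merely in your step 4 but in the statement's hypotheses themselves.
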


 Clearly, $\mathbb{H}^n=(\mathbb{S}^{n-1}\times [0,\infty),\sinh^2r d\theta^2+dr^2)$, and $\sinh r$ satisfies  conditions (1)-(3). Let
$$\mathbb{H}_*^n=\mathbb{H}^n\setminus \{0\},$$
we have the following corollary.
\begin{coro}\label{cor1}
For $n\geq 2$, there is no rotationally symmetric harmonic diffeomorphism from $(\mathbb{R}_*^n, E)$ onto $\mathbb{H}_*^n$ with the hyperbolic metric.
\end{coro}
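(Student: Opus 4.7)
The plan is to deduce the corollary directly from Theorem \ref{thm1} by showing that the warping function $g(r)=\sinh r$, which generates the hyperbolic metric $\sinh^2 r\, d\theta^2 + dr^2$ on $\mathbb{H}^n$, satisfies hypotheses (1)--(3). Once this verification is in place, there is nothing more to do: Theorem \ref{thm1} applied with $(M^n,G)=\mathbb{H}^n$ immediately rules out a rotationally symmetric harmonic diffeomorphism from $(\mathbb{R}_*^n,E)$ onto $\mathbb{H}_*^n$.

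First I would check (1): since $\sinh 0 = 0$ and $\cosh 0 = 1$, we get $g(0)g'(0) = 0 \geq 0$. Next, for (2) I would use the double-angle identity to write
\[
g(r)g'(r) = \sinh r\cosh r = \tfrac12 \sinh(2r),
\]
whose derivative is $\cosh(2r) \geq 1 > 0$ for all $r\geq 0$, so $(g(r)g'(r))'\geq 0$.

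For (3) I need to show
\[
\sup_{r>0}\frac{\sinh r\cosh r}{r} = \sup_{r>0}\frac{\sinh(2r)}{2r} > \frac{(n-2)^2}{n-1}.
\]
Since $\sinh(2r)/(2r) \to \infty$ as $r\to \infty$, the supremum is $+\infty$, and the strict inequality holds trivially for every $n\geq 2$.

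The verification is entirely routine; I do not foresee any obstacle, since each of the three conditions reduces to an elementary property of $\sinh$ and $\cosh$. The only thing worth emphasizing is that condition (3) is comfortably satisfied because of the exponential growth of $\sinh(2r)/r$, which contrasts with the Euclidean case $g(r)=r$ where $r^{-1}g(r)g'(r)=1$ would fail (3) in high dimensions. With the three conditions confirmed, Theorem \ref{thm1} applies and yields the nonexistence claimed in Corollary \ref{cor1}.
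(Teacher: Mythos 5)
Your proof is correct and matches the paper's approach exactly: the paper likewise deduces Corollary \ref{cor1} from Theorem \ref{thm1} by noting that $g(r)=\sinh r$ satisfies conditions (1)--(3), which it leaves as a one-line "clearly" while you supply the routine verification. Nothing is missing.
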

From this corollary, one can get the well-known result: There is no rotationally symmetric harmonic diffeomorphism from Euclidean space $\mathbb{R}^n$ onto hyperbolic space $\mathbb{H}^n$.
\section*{Acknowledgments}
The authors would like to thank Li Chen for his useful discussion. The author(XQ) would like to thank Prof. Luen-fai Tam for his worthy advice.

\section{Proof of Theorem \ref{thm1}}
For convenience, let us first recall the definition about the harmonic maps from $\mathbb{R}^n$ to $M^n$. Let $(x^1,\cdots,x^n)$ and $(u^1,\cdots,u^n)$ be the standard coordinates on $\mathbb{R}^n$ and $M^n$ respectively, and $\Gamma_{ij}^k$ be the Christoffel symbol of the Levi-Civita connection on $M^n$.  A $C^2$ map $u$ from $\mathbb{R}^n$ to $M^n$ is harmonic if and only if $u$ satisfies
\begin{equation} \label{eqdef}
\triangle u^i+\sum_{\alpha=1}^n\Gamma_{jk}^i(u)\frac{\partial u^j}{\partial x^\alpha}\frac{\partial u^k}{\partial x^\alpha}=0\quad \textrm{ for } i=1,\cdots, n.
\end{equation}
For more information about harmonic maps, see for example \cite{sy}.

\begin{proof}[Proof of Theorem \ref{thm1}]
 We want to prove this theorem by contradiction. Suppose $u$ is a rotationally symmetric harmonic diffeomorphism from $\mathbb{R}_*^n$ onto $M_*^n,$ then we can assume $u(r,\theta)=(y(r),\theta)$, and by (2.3) in \cite{ta} (or (1.2) in \cite{cl}) and \eqref{eqdef}, $y(r)$ should satisfy the following ODE.
  \begin{equation}\label{e2}
     y''+(n-1)r^{-1}y'-(n-1)r^{-2}g(y)g'(y)=0.
  \end{equation}
  In addition, $u$ is a diffeomorphism, so $y'$ cannot be zero. So $y$ should satisfy one of the following  conditions \eqref{bfc1} and \eqref{bfc2}, where
 \begin{equation}\label{bfc1}
     y(0)=0, y(r)\to +\infty \textrm{ as } r\to+\infty, \textrm{ and } y'>0 \textrm{ for } 0<r<\infty,
  \end{equation}
or
\begin{equation}\label{bfc2}
      y(r)\to +\infty \textrm{ as } r\to 0+, \lim_{r\to+\infty}y(r)=0, \textrm{ and } y'<0 \textrm{ for } 0<r<\infty.
  \end{equation}

By the monotonicity of $y$ in $r$, we can regard $r$ as function of $y$ with the properites
  $$
   y'=\frac{1}{r'}, \ \ \ y''=-\frac{r''}{r'^3}.
  $$
Consequently, \eqref{e2} is changed to be
  \begin{equation}\label{e3}
     -\frac{r''}{r'^3}+\frac{n-1}{rr'}-(n-1)\frac{1}{r^2}g(y)g'(y)=0,
  \end{equation}
or
  \begin{equation}\label{e4}
   -\frac{r''}{r}+(n-1)\Big(\frac{r'}{r}\Big)^2-(n-1)\Big(\frac{r'}{r}\Big)^3g(y)g'(y)=0.
  \end{equation}
Setting $x=\ln r$, we have
  $$
    x'=\frac{r'}{r}, \ \ \
    x''=\frac{r''}{r}-\Big(\frac{r'}{r}\Big)^2.
  $$
From \eqref{e4}, we can get
  \begin{equation}\label{e5}
      x''-(n-2)x'^2+(n-1)x'^3g(y)g'(y)=0.
  \end{equation}
Let $z=x'=\frac{r'(y)}{r(y)}$, we can see that \eqref{e5} is equivalent to
  \begin{equation}\label{e6}
     z'-(n-2)z^2-(n-1)z^3g(y)g'(y)=0.
  \end{equation}

Clearly, this is an Abel's equation for $n\geq 3$. It suffices for us to show that the
non-existence of solution to \eqref{e6} with condition \eqref{bfc1} or condition \eqref{bfc2}.

Now let us prove the first part, that is, equation \eqref{e6} with condition \eqref{bfc1} has no solution. In fact, this part is well-known, see for example \cite{rr,cl}. But for completeness, we will give an alternative proof here.
Suppose such a solution exists. Since $z>0$ for $y\in (0,\infty)$, using \eqref{e6} and the fact: $g(y)g'(y)\geq 0$ for $y\geq 0$ by conditions (1) (2), we can get
   \begin{eqnarray*}
     z'&=&(n-2)z^2+(n-1)z^3g(y)g'(y)\\
       &\geq& (n-2)z^2.
   \end{eqnarray*}
So
   $$
     (z^{-1})'\leq -(n-2).
   $$
Integrating over $y$, we get
   $$
     z^{-1}(y)-z^{-1}(1)\leq -(n-2)(y-1)
   $$
   for $y\geq 1.$
Letting $y\to+\infty$, we have
   $$
     z^{-1}(y)<0.
   $$
This contradicts the property $z>0$ for $y\in (0,\infty)$. Hence we finish the proof of this part.

We still need to prove the second part, that is, equation \eqref{e6} with condition \eqref{bfc2} has no solution.
Now let us use conditions (2) (3) to get the following result first.
\begin{lemm}\label{l1}
  Let $z(y)$ be a solution to \eqref{e6} with condition \eqref{bfc2}, we have
     $$
       (n-2)+(n-1)z(y)g(y)g'(y)\geq 0
     $$
for $y\in(0,+\infty)$.
\end{lemm}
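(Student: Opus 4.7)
The plan is to argue by contradiction: assume there exists $y_0 \in (0,+\infty)$ with $\phi(y_0) < 0$, where $\phi(y) := (n-2) + (n-1)z(y)g(y)g'(y)$, and derive an impossibility from \eqref{e6}. The key structural observation is to convert \eqref{e6} into a linear differential inequality satisfied by $\phi$ itself, after which a Gronwall estimate and a blow-up analysis for $1/z$ will close the argument.

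First, I would rewrite \eqref{e6} as $z' = z^2\phi$ and differentiate $\phi$:
\begin{equation*}
\phi'(y) = (n-1)\bigl[z'(y)g(y)g'(y) + z(y)(gg')'(y)\bigr] = (n-1)z^2(y)g(y)g'(y)\phi(y) + (n-1)z(y)(gg')'(y).
\end{equation*}
Condition \eqref{bfc2} forces $r'(y)<0$, so $z(y)=r'(y)/r(y)<0$ throughout $(0,+\infty)$; condition (2) gives $(gg')'\geq 0$; and conditions (1)--(2) together yield $gg'\geq 0$ as already noted earlier in the proof. Consequently $(n-1)z(gg')'\leq 0$, and we obtain the homogeneous linear inequality
\begin{equation*}
\phi'(y)\leq\psi(y)\phi(y),\qquad\psi(y):=(n-1)z^2(y)g(y)g'(y)\geq 0.
\end{equation*}
Multiplying by the integrating factor $e^{-\int_{y_0}^y\psi(s)\,ds}$ makes the product $e^{-\int_{y_0}^y\psi}\phi$ non-increasing, so for every $y\geq y_0$,
\begin{equation*}
\phi(y)\leq\phi(y_0)\exp\!\left(\int_{y_0}^y\psi(s)\,ds\right)\leq\phi(y_0)<0,
\end{equation*}
the last inequality because multiplying a negative number by a factor $\geq 1$ can only make it more negative.

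To close the argument I would blow up $1/z$. Plugging $\phi(y)\leq\phi(y_0)$ into $z'=z^2\phi$ gives
\begin{equation*}
(1/z)'(y)=-\frac{z'(y)}{z(y)^2}\geq -\phi(y_0)>0,
\end{equation*}
and integrating from $y_0$ yields $1/z(y)\geq 1/z(y_0)-\phi(y_0)(y-y_0)$, whose right-hand side tends to $+\infty$ as $y\to +\infty$. This is impossible, since $z(y)<0$ on $(0,+\infty)$ forces $1/z(y)<0$ throughout; concretely, a contradiction appears once $y-y_0>\bigl(z(y_0)\phi(y_0)\bigr)^{-1}$, which is a positive real number because both factors in the denominator are negative. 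The main obstacle is really the first step: recognizing that $\phi$ itself satisfies a homogeneous linear inequality $\phi'\leq\psi\phi$ with $\psi\geq 0$, which is exactly what allows a negative initial value to propagate forward under Gronwall; once that is in place, the remaining blow-up estimate for $1/z$ is routine, and the hypotheses used are precisely the sign conditions $gg'\geq 0$ and $(gg')'\geq 0$ coming from (1) and (2).
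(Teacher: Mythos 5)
Your proof is correct, and its overall strategy --- contradiction at some $y_0$ with $\phi(y_0)<0$, propagation of that negativity forward to all $y>y_0$, and then the blow-up of $z^{-1}$ via $(z^{-1})'=-\phi\geq-\phi(y_0)>0$ --- coincides with the paper's. The one place where you genuinely diverge is the propagation step. The paper defines the set $\Sigma$ of points $\omega$ for which $\phi<0$ on $(y_0,\omega)$ and runs an open--closed connectedness argument: on that set $z'=z^2\phi<0$, so $z$ decreases, and since $z<0$ while $gg'\geq 0$ is nondecreasing (condition (2)), the product $z\,gg'$ and hence $\phi$ is nonincreasing, which keeps $\phi$ below $\phi(y_0)<-\delta$ and shows $\Sigma$ is open. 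You instead differentiate $\phi$ directly, obtain the linear inequality $\phi'\leq\psi\phi$ with $\psi=(n-1)z^2gg'\geq 0$ (discarding the term $(n-1)z(gg')'\leq 0$), and apply an integrating factor; since $e^{\int\psi}\geq 1$, a negative initial value can only persist. The two mechanisms use exactly the same hypotheses --- $gg'\geq 0$ from (1)--(2) and $(gg')'\geq 0$ from (2), plus $z<0$ from \eqref{bfc2} --- but your Gronwall version is global from the outset and dispenses with the connectedness bookkeeping, at the modest cost of requiring $\phi$ to be differentiated (harmless here, since $g$ is $C^2$ and $z$ solves \eqref{e6}). The final quantitative remark, that the contradiction occurs once $y-y_0>\bigl(z(y_0)\phi(y_0)\bigr)^{-1}$, checks out since both factors are negative.
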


\noindent\textbf{Proof of Lemma \ref{l1}.} Let us assume on the contrary, that is,
   $$
    (n-2)+(n-1)z(y_0)g(y_0)g'(y_0)<0
   $$
for some $y_0\in(0,+\infty)$. Setting
   $$
     \Sigma\equiv\{\omega\in(y_0,+\infty):\ \ \
     (n-2)+(n-1)z(y)g(y)g'(y)<0 \mbox{ holds in }
     (y_0,\omega)\}.
   $$
It's clearly that $\Sigma$ is a closed set in $(y_0,+\infty)$, and is nonempty by continuity of $z,\ g$ and $g'$.
If we can prove that $\Sigma$ is also open in $(y_0,+\infty)$, then
   $$
    \Sigma\equiv (y_0,+\infty)
   $$
by connection of $(y_0,+\infty)$. In fact, by equation \eqref{e6} and the definition of $\Sigma$, we can get $z(y)$ is monotone decreasing in
$(y_0,\omega_0)$ for any
$\omega_0\in\Sigma$, which implies that
   $$
    (n-2)+(n-1)z(y)g(y)g'(y)
   $$
is also decreasing in $(y_0,\omega_0)$ since $z(y)<0$ for $y\in(0,\infty)$. So
\begin{equation*}
\begin{split}
    &(n-2)+(n-1)z(\omega_0)g(\omega_0)g'(\omega_0)\\
    &<(n-2)+(n-1)z(y_0)g(y_0)g'(y_0)<-\delta
    \end{split}
\end{equation*}
for some positive constant $\delta$. That's to say, $\omega_0$ is an interior point of $\Sigma$
by continuity. As a result, $\Sigma$ is open. Hence
   $$
     \Sigma=(y_0,+\infty)
   $$
and for all $y>y_0$,
\begin{equation*}
\begin{split}
    &(n-2)+(n-1)z(y)g(y)g'(y)\\
    &<(n-2)+(n-1)z(y_0)g(y_0)g'(y_0)<-\delta.
    \end{split}
\end{equation*}

Consequently, we can get
  \begin{eqnarray*}
    z'&=& z^2[(n-2)+(n-1)zg(y)g'(y)]\\
      &\leq& -\delta z^2
  \end{eqnarray*}
  for $y\geq y_0.$
So
   $$
    (z^{-1})'>\delta.
   $$
Hence
   $$
    z^{-1}(y)>z^{-1}(y_0)+\delta(y-y_0)>0
   $$
provided that $y>y_0$ is large enough, which contradicts the property $z<0$ for $y\in(0,\infty)$.
Therefore, Lemma \ref{l1} holds. \hfill{} $\Box$\\

Combining  \eqref{e6}, the result of Lemma \ref{l1} and condition \eqref{bfc2}, we can get the following results.
\begin{coro}\label{coro1}
 Let $z(y)$ as the same as in Lemma \ref{l1}, we have
\begin{equation}\label{eqzp1}
    z'(y)\geq 0 \mbox{ for } y>0,
    \end{equation}
     then
\begin{equation}\label{eqzp2}
    \lim_{y\to 0^+}z(y)=-\infty.
    \end{equation}
\end{coro}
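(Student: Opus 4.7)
The corollary has two assertions: the monotonicity \eqref{eqzp1} and the blow-up \eqref{eqzp2}. My plan is to deduce \eqref{eqzp1} in a single line from the ODE \eqref{e6} together with Lemma \ref{l1}, and then to obtain \eqref{eqzp2} by a contradiction argument that exploits the interpretation of $z$ as the logarithmic derivative of $r(y)$.

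For \eqref{eqzp1}, I would rewrite \eqref{e6} as
\[
z'(y) \;=\; z(y)^2 \bigl[(n-2) + (n-1)\, z(y)\, g(y)\, g'(y)\bigr].
\]
The factor $z(y)^2$ is nonnegative, and the bracketed factor is precisely the quantity that Lemma \ref{l1} has just shown to be nonnegative on $(0,+\infty)$. Multiplying, the inequality $z'(y) \geq 0$ follows at once.

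For \eqref{eqzp2}, I would first observe that under \eqref{bfc2}, since $r > 0$ and $r'(y) < 0$ throughout, one has $z = r'/r < 0$ strictly on $(0,+\infty)$. Combined with the monotonicity from \eqref{eqzp1}, this forces the limit $L := \lim_{y \to 0^+} z(y)$ to exist in $[-\infty, 0)$. Assume for contradiction that $L$ is finite. Then $z$ is bounded on some interval $(0, y_0]$, and since $z = (\ln r)'$, integration yields
\[
\ln r(y) \;=\; \ln r(y_0) - \int_y^{y_0} z(s)\, ds,
\]
whose right-hand side stays bounded as $y \to 0^+$. This contradicts $r(y) \to +\infty$ from \eqref{bfc2}, and hence $L = -\infty$.

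The only slightly delicate step is translating \eqref{bfc2}, which is stated in terms of $y$ as a function of $r$, into the correct sign information for $r(y)$ and its derivative so that $z$ is bounded strictly away from $0$ from above. Once that is in hand, the contradiction via the $\ln r$ integral is essentially a one-line computation, so I do not anticipate any serious obstacle beyond this bookkeeping.
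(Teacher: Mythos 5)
Your proposal is correct and follows essentially the same route as the paper: \eqref{eqzp1} from the factorization $z'=z^2[(n-2)+(n-1)zgg']$ together with Lemma \ref{l1}, and \eqref{eqzp2} from the fact that $\ln r(y)\to+\infty$ as $y\to0^+$ forces $z=(\ln r)'$ to be unbounded near $0$, which combined with monotonicity and $z<0$ gives the limit $-\infty$. Your explicit integral $\ln r(y)=\ln r(y_0)-\int_y^{y_0}z(s)\,ds$ merely spells out the unboundedness step that the paper states without detail.
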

\noindent\textbf{Proof of Corollary \ref{coro1}.}
\eqref{eqzp1} is a direct result of  \eqref{e6} and the result of Lemma \ref{l1}.

By condition \eqref{bfc2}, we have $\lim_{y\to 0^+}r(y)=+\infty$, so $\lim_{y\to 0^+}\ln r(y)=+\infty$, then $z=(\ln r)'$ is unbounded on $(0,\epsilon)$ for any $\epsilon>0$. On the other hand, from \eqref{eqzp1}, $z=(\ln r)'$ is an increasing function in $y$ for $y\geq 0$. Hence for  $y\to 0^+$, $z=(\ln r)'\to -\infty.$ \hfill{} $\Box$

\vspace{0.2cm}
Let us continue to prove the second part.
Setting $Z(y)=z^{-1}(y)$ and using equation \eqref{e6} again, we
have
   \begin{equation}\label{e7}
     -Z'=(n-2)+(n-1)\frac{g(y)g'(y)}{Z}.
   \end{equation}
Letting $w=Z^2(y)$, we get
   \begin{equation}\label{e8}
     -\frac{1}{2}w'=-(n-2)\sqrt{w}+(n-1)g(y)g'(y).
   \end{equation}
So
   \begin{equation}\label{e9}
     \begin{cases}
        w'=2(n-2)\sqrt{w}-2(n-1)g(y)g'(y)\\
         \lim_{y\to0^+}w(y)=0
     \end{cases}
   \end{equation}
where $\lim_{y\to0^+}w(y)=0$ comes from \eqref{eqzp2}. By
\eqref{e9} and nonnegativity of $g(y)g'(y)$, we have
   \begin{eqnarray*}
     w'&\leq& 2(n-2)\sqrt{w}.
   \end{eqnarray*}
Consequently, we can get
   $$
    \sqrt{w(y)}\leq (n-2)y
   $$
   for $y>0$.
Substituting this into \eqref{e9}, we can get
 \begin{eqnarray*}
 \begin{split}
     0&\leq
     w'(y)=2(n-2)\sqrt{w}-2(n-1)g(y)g'(y)\\
     &\leq2(n-2)^2y-2(n-1)g(y)g'(y)
     \end{split}
   \end{eqnarray*}
for all $y>0$. From this, we can get
        $$
         \sup_{y>0}[y^{-1}g(y)g'(y)]\leq\frac{(n-2)^2}{n-1}.
        $$
 This contradicts condition (3). Hence equation \eqref{e6} with condition \eqref{bfc2} has no solution.

Combining the results of these two parts, we can conclude that no such harmonic diffeomorphism
exists from $(\mathbb{R}_*^n, E)$ onto $(M_*^n, G)$ provided that $G$ satisfies conditions (1)-(3). So we finish the proof of Theorem \ref{thm1}.
\end{proof}

\begin{rema}\label{rema1} The``diffeomorphism" in this statement of Theorem \ref{thm1} can be replaced by ``homeomorphism". That is, for $n\geq 2$, there is no rotationally symmetric harmonic homeomorphism from $(\mathbb{R}_*^n, E)$  onto $(M_*^n,G)$ provided that the Riemannian metric $G$ satisfies conditions (1)(3) and $g(y)g'(y)>0$ for $y>0$.
\end{rema}
\noindent\textbf{Proof of Remark \ref{rema1}.}
We need to show that such a rotationally symmetric harmonic homeomorphism is a diffeomorphism. This result should exist somewhere for more general setting, but we didn't find a suitable reference. For readable, we explain the reason for our special case. Noting that if such a rotationally symmetric harmonic homeomorphism exists, then the corresponding $C^2$ function $y$ is positive with $y'\geq 0$ for all $r\in (0,\infty)$, or with $y'\leq 0$ for all $r\in (0,\infty)$. Clearly, it suffices for us to prove $y'\not=0$ for any $r\in (0,\infty)$.

We will first show that if $y>0$ and $y'\geq 0$ for all $r\in (0,\infty)$, then $y'>0$ for all $r\in (0,\infty)$. Suppose $y'(r_0)=0$ for some $r_0>0$, then $y''(r_0)=0$ by maximum principle. Substituting these into \eqref{e2}, we can get a contradiction.

Similarly, we can get that if $y>0$ and  $y'\leq 0$ for all $r\in (0,\infty)$, then $y'(r)< 0$  for all  $r\in (0,\infty).$

Combining the results of the previous two cases, we can conclude  that  $y'\not=0$ for any $r\in (0,\infty)$, so such a homeomorphism is diffeomorphic. Hence by Theorem  \ref{thm1}, we can get the result of Remark \ref{rema1}.
 \hfill{} $\Box$

\end{document}